\newcommand{\NN}{\mathbb{N}}
\newcommand{\er}{\mathbb{R}}
\newcommand{\ce}{\mathbb{C}}
\newtheorem{theorem}{Theorem}[section]
\newtheorem*{theorem*}{Theorem}
\newtheorem{proposition}[theorem]{Proposition}
\newtheorem*{proposition*}{Proposition}
\newtheorem{lemma}[theorem]{Lemma}
\newtheorem*{lemma*}{Lemma}
\newtheorem{corollary}[theorem]{Corollary}
\newtheorem*{corollary*}{Corollar}
\newtheorem*{fact*}{Fact}
\theoremstyle{definition}
\newtheorem{definition}[theorem]{Definition}
\newtheorem*{definition*}{Definition}
\newtheorem{claim}[theorem]{Claim}
\newtheorem*{claim*}{Claim}
\newtheorem*{conjecture*}{Conjecture}
\newtheorem{theoremi}{Theorem}
\theoremstyle{remark}
\newtheorem*{example*}{Example}
\newtheorem{remark}[theorem]{Remark}
\newtheorem*{remark*}{Remark}
\newtheorem*{note*}{Note}
\newtheorem*{question*}{Question}
\newcommand{\norm}[1]{\left\lVert #1 \right\rVert}
\DeclareMathOperator{\Id}{Id}
\DeclareMathOperator{\bd}{bd}
\DeclareMathOperator{\Homeo}{Homeo}
\DeclareMathOperator{\Aut}{Aut}
\begin{document}

\title{Nontrivial homeomorphisms of \v{C}ech-Stone remainders}%

\author[A. Vignati]{Alessandro Vignati}
\address[A. Vignati]{Department of Mathematics and Statistics\\
York University\\
4700 Keele Street\\
Toronto, Ontario\\ Canada, M3J
1P3\\
}
\email{ale.vignati@gmail.com}

\subjclass[2010]{46L40, 03E50, 46L05}
\keywords{\v{C}ech-Stone remainder, corona C*-algebras, homeomorphisms, automorphisms, continuum hypothesis, manifold}
\thanks{This research was partially completed during the author's fellowship at Institut Mittag-Leffler for the program on Classification of operator algebras: complexity, rigidity, and dynamics. The author would like to thanks the organizers for the support. The author is supported by a Susan Mann Scholarship at York University}

\date{\today}%
\begin{abstract}
We study the group of automorphisms of certain corona C*-algebras. As a corollary of a more general C*-algebraic result, we show that, under the Continuum Hypothesis, $\beta X\setminus X$ has nontrivial homeomorphisms, whenever $X$ is a noncompact locally compact metrizable manifold.
 \end{abstract}
 
\maketitle
\section{Introduction}
If $X$ is a locally compact, noncompact space we say that an homeomorphism $\phi$ of its \v{C}ech-Stone remainder $X^*=\beta X\setminus X$ is trivial if there is a continuous $f\colon X \to X$ with the property that $\phi=\beta f\restriction X^*$, where $\beta f$ is the unique continuous extension of $f$ to $\beta X$. If $X$ is Polish, there can be only $\mathfrak c=2^{\aleph_0}$ trivial homeomorphisms of $X^*$. In general, the task of finding nontrivial homeomorphisms of \v{C}ech-Stone remainders is a challenging one. In fact, for a Polish $X$ as above, the existence of nontrivial homeomorphisms of $X^*$ is conjectured to be independent from the usual axioms of set theory (ZFC, the Zermelo-Fraenkel scheme plus the Axiom of Choice). Under the Continuum Hypothesis (CH from now on) it is conjectured that there are nontrivial homeomorphisms of $X^*$, while under the assumption of different set theoretical axioms (like the Proper Forcing Axiom, PFA, or some of its consequences), it is conjectured that $\Homeo(X^*)$, the group of homeomorphisms of $X^*$, has only trivial elements.

The current state-of-the-art of the conjectures is as follow. 
Rudin's work (\cite{Rudin}) shows that, under CH, $\NN^*$ has $2^{\mathfrak c}$-many homeomorphisms, and hence, since $\mathfrak c<2^{\mathfrak c}$, nontrivial ones. The same result applies to $X^*$ whenever $X$ is locally compact noncompact Polish and zero dimensional (in this case, under CH, $X^*$ is homeomorphic to $\NN^*$ by Parovi\v{c}enko's Theorem). On the other hand, for such an $X$, the results in \cite{Shelah-Steprans.PFAA}, \cite{Velickovic.OCAA}, \cite{Farah.AQ}, and \cite{Farah-McKenney.ZD} showed that, under PFA (or some of its consequences), one can prove that $X^*$ has only trivial homeomorphisms.

At a current stage, even though partial progress has been made (e.g., \cite[Theorem 5.3]{Farah-Shelah.RCQ}), all the spaces $X$ for which PFA proves that all elements of $\Homeo(X^*)$ is trivial are zero dimensional.

On the side of the conjecture dealing with constructing nontrivial homeomorphisms of $X^*$ under CH, the first successful attempt to go beyond zero dimensionality was Yu's. He constructed, under CH, a nontrivial homeomorphism of $\er^*$ (see \cite[\S 9]{KP.STCC}). Currently, the sharpest results asserts that, if $X$ is locally compact, noncompact and Polish and assuming CH, there are nontrivial elements of $\Homeo(X^*)$ if $X$ has countably many clopen sets (\cite{Coskey-Farah}) or if $X$  is an increasing union of compact sets $K_n$ satisfying $\sup_n |K_n\cap (\overline {X\setminus K_n})|<\infty$ (\cite[Theorem 2.5]{Farah-Shelah.RCQ}). The latter result uses countable saturation of the C*-algebra $C_b(X)/C_0(X)$ (see \cite{farah2011countable} and \cite[Theorem 3.1]{Farah-Shelah.RCQ}) and provided a different proof of Yu's result on $\Homeo(\er^*)$.

For $n\geq 2$, it was unknown if one could have a nontrivial homeomorphism of $(\er^n)^*$. The following appealing consequence of Theorem \ref{thm:main} below settles this uncertainty.

\begin{theoremi}\label{thmi:A}
Assume CH and let $X$ be a locally compact metrizable noncompact manifold. Then $X^*$ has $2^{\mathfrak c}$-many homeomorphisms. In particular,  $X^*$ has nontrivial homeomorphisms.
\end{theoremi}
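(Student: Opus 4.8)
The plan is to derive Theorem~\ref{thmi:A} as a corollary of the general C*-algebraic Theorem~\ref{thm:main}, using the standard dictionary between topological dynamics on $\check{C}$ech-Stone remainders and the automorphism theory of corona C*-algebras. Concretely, for a locally compact noncompact metrizable manifold $X$, Gelfand duality identifies the homeomorphisms of $X^* = \beta X \setminus X$ with the automorphisms of the commutative corona C*-algebra $C_b(X)/C_0(X) \cong C(X^*)$; here $C_0(X)$ is the (non-unital) C*-algebra of continuous functions vanishing at infinity, its multiplier algebra is $C_b(X) = C(\beta X)$, and the corona $C_b(X)/C_0(X)$ is the quotient. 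Thus it suffices to show that under CH the algebra $C(X^*)$ admits $2^{\mathfrak c}$-many automorphisms, and then to transfer the count back to $\Homeo(X^*)$.

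The first step is to verify that $C_0(X)$ meets whatever hypotheses Theorem~\ref{thm:main} places on the ideal (or that the manifold structure of $X$ supplies them). A noncompact locally compact metrizable manifold is $\sigma$-compact and Polish, so $C_0(X)$ is a separable C*-algebra; separability is exactly the ingredient that, under CH, lets one run a back-and-forth/saturation argument to build automorphisms of the corona. The second step is to invoke Theorem~\ref{thm:main} to conclude that the corona $C(X^*)$ has $2^{\mathfrak c}$ automorphisms. The third step is the transfer: each automorphism $\alpha$ of the commutative C*-algebra $C(X^*)$ is, again by Gelfand duality applied to the compact Hausdorff space $X^*$, of the form $\alpha(f) = f \circ \phi$ for a unique homeomorphism $\phi$ of $X^*$, and distinct automorphisms yield distinct homeomorphisms. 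Hence $\card{\Homeo(X^*)} = \card{\Aut(C(X^*))} = 2^{\mathfrak c}$.

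For the "in particular" clause I would simply compare cardinalities: since $X$ is Polish, the remarks in the introduction give that there are at most $\mathfrak c = 2^{\aleph_0}$ trivial homeomorphisms of $X^*$ (a trivial $\phi$ is determined by a continuous self-map $f\colon X\to X$, and there are only $\mathfrak c$ such maps on a separable metrizable space). Since $\mathfrak c < 2^{\mathfrak c}$, the $2^{\mathfrak c}$-many homeomorphisms produced cannot all be trivial, so $X^*$ has nontrivial homeomorphisms.

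The main obstacle, and the reason the bulk of the work lives in Theorem~\ref{thm:main} rather than here, is checking that a noncompact metrizable manifold really does fall under the scope of that theorem. The delicate point is geometric: unlike the zero-dimensional case (where $X^* \cong \NN^*$ under CH via Parovi\v{c}enko), a positive-dimensional manifold has a rich clopen/connectedness structure, and one must confirm that the abstract C*-algebraic hypotheses of Theorem~\ref{thm:main}—presumably a condition on $C_0(X)$ such as being separable with a suitable approximate structure, together with CH-driven countable saturation of the corona—are genuinely satisfied by $C_0(X)$ for such $X$. I expect this verification of hypotheses, together with making precise the claim that the full automorphism group (not merely a single nontrivial automorphism) has size $2^{\mathfrak c}$, to be where care is required; everything else is the routine Gelfand-duality dictionary.
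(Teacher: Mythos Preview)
Your overall strategy is the paper's: apply Gelfand duality to identify $\Homeo(X^*)$ with $\Aut(Q(X,\ce))$, invoke Theorem~\ref{thm:main}, and then use the cardinality bound $\card{\text{trivial homeomorphisms}}\le\mathfrak c<2^{\mathfrak c}$. This is exactly Corollary~\ref{cor1}.

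The one point that needs correcting is your identification of the hypothesis of Theorem~\ref{thm:main}. It is \emph{not} an abstract C*-algebraic condition on $C_0(X)$ such as separability or countable saturation of the corona; Theorem~\ref{thm:main} takes an arbitrary coefficient C*-algebra $A$ (here $A=\ce$) and instead requires that the space $X$ be \emph{flexible} in the sense of Definition~\ref{def:flexible}: there must exist pairwise disjoint compact $Y_n\subseteq X$, escaping every compact set, together with boundary-fixing homeomorphisms $\phi_{n,m}\in\Homeo_{\bd}(Y_n)$ whose radii $r(\phi_{n,m})$ decrease to $0$ while remaining nonzero. The verification that a locally compact noncompact metrizable manifold is flexible is Remark~\ref{rem:manifolds}: one takes the $Y_n$ to be small closed balls along a sequence going to infinity, and uses that $\Homeo_{\bd}$ of a closed Euclidean ball contains a continuous path through the identity, from which the required $\phi_{n,m}$ are extracted. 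Once you replace your speculative ``separability plus saturation'' check with this concrete flexibility verification, your proposal coincides with the paper's proof.
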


Even though our main result has a topological application, its proof combines both topological and C*-algebraic techniques and it is stated in terms of C*-algebras. The C*-algebraic approach is justified by the construction of the multiplier algebra of a nonunital C*-algebra, which is the noncommutative correspondent of the \v{C}ech-Stone compactification.
Given a nonunital C*-algebra $A$ one constructs its multiplier algebra, $\mathcal M(A)$, which is a nonseparable unital C*-algebra in which $A$ sits as an ideal in a universal way. The quotient $\mathcal M(A)/A$ is called the corona of $A$. Multipliers and coronas are objects carrying many properties of broad interest (see \cite{Lance.HM} and \cite{Pedersen.CC}).

In the commutative case, when $A=C_0(X)$ for some locally compact noncompact $X$, then $\mathcal M(A)=C_b(X)\cong C(\beta X)$ and $\mathcal M(A)/A\cong C(X^*)$. By duality, automorphisms of $\mathcal M(A)/A$ correspond bijectively to homeomorphisms of $X^*$. 

The interest on the influence of set theory in automorphisms of coronas of noncommutative C*-algebras was motivated by the seek of an outer automorphism of the Calkin algebra, the quotient of the bounded linear operators on $\ell^2(\NN)$ modulo the ideal of compact operators, see \cite{BDF}. It was proved, in \cite{Phillips-Weaver}, that under CH the Calkin algebra has $2^{\mathfrak c}$-automorphisms (and, therefore, outer ones), while Farah showed that under the Open Coloring Axiom (a consequence of PFA) the Calkin algebra has only inner automorphisms (see \cite{Farah.C}).

Conjecturally (see \cite[Conjectures 1.2 and 1.3]{Coskey-Farah}) if $A$ is a nonunital separable C*-algebra then CH implies that $\mathcal M(A)/A$ has $2^{\mathfrak c}$-many automorphisms, while PFA implies that the structure of $\Aut(\mathcal M(A)/A)$ is as rigid as possible (for the definition of triviality in the noncommutative case see \cite[Definition 1.1]{Coskey-Farah}). The conjectures have been verified for some classes of C*-algebras, even though a large amount of projections in $A$ is usually needed. For more on this, see \cite{Coskey-Farah}, \cite{Ghasemi.FDD}, \cite{Farah-Shelah.RCQ}, \cite{McKenney.UHF}  or the upcoming \cite{MKAV.AC}.

We are interested in C*-algebras of the form $C_0(X,A)$, the algebra of continuous function from some locally compact space $X$ to a C*-algebra $A$ which are vanishing at infinity. If $A$ is unital, the multiplier of $C_0(X,A)$ is (isomorphic to) $C_b(X,A)$, the algebra of bounded continuous functions from $X$ to $A$. We denote $C_b(X,A)/C_0(X,A)$ by $Q(X,A)$. Our main result, Theorem \ref{thm:main}, asserts that under the assumption of CH, if $X$ is a space satisfying a technical condition ensuring some sort of flexibility (e.g., a manifold, see Definition \ref{def:flexible}), and $A$ is a C*-algebra, then $Q(X,A)$ has $2^{\mathfrak c}$-many automorphisms. Note we do not ask for $A$ to be unital. On the other hand, if $A$ is nonunital, $Q(X,A)$ is not the corona of $C_0(X,A)$. In this case, we don't know whether a result similar to Theorem \ref{thm:main} applies to the corona of $C_0(X,A)$. For more on $\mathcal M(C_0(X,A))$ if $A$ is nonunital, see \cite{APT.Mult}.

The paper is structured as follows: Section \ref{sec:prel} contains preliminaries and notation, and section \ref{sec:mainthm} is dedicated to the proof of our main result. Lastly, we provide an example of a space $X$ for which it is not known whether $\Homeo(X^*)$ contains nontrivial elements, even under CH.

The author would like to thank Ilijas Farah for helpful suggestions and comments.
\section{Preliminaries and notation}\label{sec:prel}

Unless stated differently, $X$ is a locally compact noncompact Polish space with a metric $d$ inducing its topology and $A$ is a C*-algebra.
Given a closed $Y\subseteq X$ we say that $\phi\in \Homeo(Y)$ \emph{fixes the boundary} of $Y$ if, whenever $y\in \bd_X(Y)=Y\cap (\overline{X\setminus Y})$, then $\phi(y)=y$. We denote the set of all such homeomorphisms by $\Homeo_{\bd_X(Y)}(Y)$ (or $\Homeo_{\bd}(Y)$ if $X$ is clear from the context). When $\phi\in \Homeo_{\bd}(Y)$ it can be extended in a canonical way to $\tilde\phi\in\Homeo(X)$ by 
\[
\tilde\phi(x)=\begin{cases}\phi(x) & \text{ if }x\in Y\\
x&\text{otherwise.}
\end{cases}
\]
If $Y_n\subseteq X$, for $n\in \NN$, are closed and disjoint sets with the property that no compact subset of $X$ intersects infinitely many $Y_n$'s, we have that $Y=\bigcup Y_n$ is closed. If $\phi_n\in\Homeo_{\bd}(Y_n)$ then $\phi=\bigcup\phi_n\in\Homeo_{\bd}Y$ is well defined. In this situation we abuse of notation and say that $\tilde\phi$ as constructed above extends canonically $\{\phi_n\}$.

For a $\phi\in\Homeo(X)$ we will denote by $r(\phi)$ the \emph{radius} of $\phi$  as
\[
r(\phi)=\sup_{x\in X}d(x,\phi(x)).
\]
If $Y$ is compact and $\phi\in\Homeo_{\bd}(Y)$ we have that $r(\phi)<\infty$ and $r(\phi)$ is attained by some $y\in Y$. It can be easily verified that $r(\phi_0\phi_1)\leq r(\phi_0)+r(\phi_1)$ for $\phi_0,\phi_1\in\Homeo(X)$.

Note that every $\tilde\phi\in\Homeo(X)$ determines uniquely a $\psi\in\Aut(C_b(X,A))$, which induces a $\tilde\psi\in\Aut(Q(X,A))$. If $Y_n\subseteq X$ are disjoint closed sets with the property that no compact $Z\subseteq X$ intersects infinitely many of them and $\phi_n\in\Homeo_{\bd}(Y_n)$, we will abuse of notation and say that $\psi$ and $\tilde\psi$ are \emph{canonically determined} by $\{\phi_n\}$.

We are interested in a particular class of topological spaces:
\begin{definition}\label{def:flexible}
A locally compact noncompact Polish space $(X,d)$ is \emph{flexible} if there are disjoint sets $Y_n\subseteq X$ and $\phi_{n,m}\in\Homeo_{bd}(Y_n)$ with the following properties:
\begin{enumerate}[label=(\arabic*)]
\item\label{defin:cond1} every $Y_n$ is a compact subset of $X$ and there is no compact $Z\subseteq X$ that intersects infinitely many $Y_n$'s and
\item\label{defin:cond2} for all $n$, $r(\phi_{n,m})$ is a decreasing sequence tending to $0$ as $m\to\infty$, with $r(\phi_{n,m})\neq 0$ whenever $n,m\in\NN$.
\end{enumerate}
The sets $Y_n$ and the homeomorphisms $\phi_{n,m}$ are said to witness that $X$ is flexible.
\end{definition}
\begin{remark}\label{rem:manifolds}
 We don't know whether condition \ref{defin:cond2} is equivalent to having a sequence of disjoint $Y_n$'s satisfying \ref{defin:cond1} for which $\Homeo_{\bd}(Y_n)$ has a continuous path. This condition is clearly stronger than \ref{defin:cond2}. In fact, being $\Homeo_{bd}(Y_n)$ a group, if it contains a path, then there is a path $a(t)\subseteq\Homeo_{\bd}(Y_n)$ with $a(0)=Id$ and $a(t)\neq a(0)$ if $t\neq 0$. By continuity, if a path exists, it can be chosen so that $s<t$ implies $r(a(s))<r(a(t))$. Since any closed ball in $\er^n$ has this property, a typical example of a flexible space is a manifold. 

We should also note that if $X$ is a locally compact Polish space for which there is a close discrete sequence $x_n$ and a sequence of open sets $U_n$ with $U_i\cap U_j=\emptyset$ if $i\neq j$, $x_n\in U_n$, and such that each $U_n$ is a manifold, then $X$ is flexible. In particular, if $X$ has a connected component which is a noncompact Polish manifold, then $X$ is flexible.

Lastly, if $X$ is flexible and $Y$ has a compact clopen $Z$, then $\bd(Y_n\times Z)=\bd(Y_n)\times Z$, therefore $Z_n=Y_n\times Z$ and $\rho_{m,n}=\psi_{n,m}\times id$ witness the flexibility of $X\times Y$. In particular, if $Y$ is compact, $X\times Y$ is flexible.
\end{remark}

By $\NN^{\NN}$ we denote the set of all sequences of natural numbers, where $f(n)>0$ for all $n$. If $f_1,f_2\in\NN^{\NN}$ we write $f_1\leq^* f_2$ if 
\[
\forall^\infty n  (f_1(n)\leq f_2(n)).\footnote{The shortening $\forall^\infty n$ is for $\exists n_0\forall n\geq n_0$. Equally $\exists^\infty n$ is for \emph{there are infinitely many $n$}.}
\]
If $Y_n\subseteq X$ are compact sets with the property that no compact $Z\subseteq X$ intersects infinitely many $Y_n$, we can associate to every $f\in\NN^{\NN}$ a subalgebra of $C_b(X,A)$ as
\begin{eqnarray*}
D_f(X,A,Y_n)=\{g\in C_b(X,A)\mid &&\forall \epsilon>0\forall^\infty n\forall x,y\in Y_n \\
&&(d(x,y)<\frac{1}{f(n)}\Rightarrow \norm{g(x)-g(y)}<\epsilon)\}.
\end{eqnarray*}
We denote by $C_f(X,A,Y_n)$ the image of $D_f(X,A,Y_n)$ under the quotient map $\pi\colon C_b(X,A)\to Q(X,A)$. (If $X$, $Y_n$ and $A$ are clear from the context, we simply write $D_f$ and $C_f$.

The following proposition clarifies the structure of the $D_f$'s and the $C_f$'s.

\begin{proposition}\label{prop:buildingblocks}
Let $(X,d)$ be a locally compact noncompact Polish space and $A$ be a C*-algebra. Let $Y_n\subset X$ be infinite  compact disjoint sets such that no compact subset of $X$ intersects infinitely many $Y_n$'s. Then:
\begin{enumerate}[label=(\arabic*)]
\item\label{prop.bb1} For all $f\in\NN^\NN$ we have that $D_f$ is a C*-subalgebra of $C_b(X,A)$. If $A$ is unital, so is $D_f$;
\item\label{prop.bb2} if $f_1\leq^* f_2$ then $C_{f_1}\subseteq C_{f_2}$;
\item\label{prop.bb3} $C_b(X,A)=\bigcup_{f\in\NN^{\NN}}D_f$;
\item\label{prop.bb4} for all $f\colon \NN\to\NN$ there is $g\in C_b(X,A)$ such that $\pi(g)\notin C_f$.
\end{enumerate}
\end{proposition}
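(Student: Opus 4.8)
The plan is to read off all four assertions from the definition of $D_f$, the only genuinely nontrivial point being the construction in \ref{prop.bb4}. For \ref{prop.bb1} I would check the C*-subalgebra axioms one at a time, exploiting that the defining condition is an ``eventual, for-every-$\epsilon$'' statement stable under the triangle inequality. Linearity and closure under the involution are immediate, the latter because $\norm{g^*(x)-g^*(y)}=\norm{g(x)-g(y)}$; for products one estimates $\norm{g(x)h(x)-g(y)h(y)}\le \norm{g}\,\norm{h(x)-h(y)}+\norm{g(x)-g(y)}\,\norm{h}$ and applies the $D_f$-witnesses of $g$ and $h$ at level $\epsilon/(2\max(\norm g,\norm h))$. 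Norm-closedness is a standard $3\epsilon$-argument: approximate $g$ by some $g_k\in D_f$ within $\epsilon/3$ in supremum norm and invoke the $D_f$-condition for $g_k$ at level $\epsilon/3$. If $A$ is unital, the constant function $1_A$ trivially satisfies the condition, so $1_A\in D_f$.

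For \ref{prop.bb2} I would prove the stronger inclusion $D_{f_1}\subseteq D_{f_2}$, whence $C_{f_1}\subseteq C_{f_2}$ by applying $\pi$. If $f_1\le^* f_2$ then $1/f_2(n)\le 1/f_1(n)$ for all large $n$, so for each fixed $\epsilon$ the cofinite set of $n$ witnessing the $D_{f_1}$-implication, intersected with the cofinite set where $1/f_2(n)\le 1/f_1(n)$, witnesses the $D_{f_2}$-implication. For \ref{prop.bb3}, the inclusion $\bigcup_f D_f\subseteq C_b(X,A)$ is trivial; for the converse I would use that $g\restriction Y_n$ is uniformly continuous (each $Y_n$ is compact), choosing for each $n$ a modulus $\delta_n>0$ witnessing uniform continuity of $g\restriction Y_n$ at level $1/n$ and setting $f(n)=\lceil 1/\delta_n\rceil$. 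Then, given $\epsilon$, for every $n$ with $1/n<\epsilon$ the implication at scale $1/f(n)\le\delta_n$ forces $\norm{g(x)-g(y)}<1/n<\epsilon$, so $g\in D_f$.

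The real work is \ref{prop.bb4}, and that is where I expect the main obstacle. Fix a norm-one $a\in A$ (the statement is vacuous when $A=0$). The idea is to build $g$ with a fixed-size oscillation on every $Y_n$ at a scale finer than $1/f(n)$, in a way no $C_0$-perturbation can smooth out. Since each $Y_n$ is infinite and compact it has an accumulation point, so I can choose distinct $x_n,y_n\in Y_n$ with $d(x_n,y_n)<1/f(n)$. Because no compact set meets infinitely many $Y_n$, the family $\{Y_n\}$ is locally finite and the points $x_n$ have no accumulation point, hence form a closed discrete set; this lets me select pairwise disjoint open $V_n\ni x_n$ with $y_n\notin V_n$ and $\{V_n\}$ locally finite, together with continuous $\mu_n\colon X\to[0,1]$ supported in $V_n$ and $\mu_n(x_n)=1$. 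Setting $g=\sum_n \mu_n\,a$, the function is bounded by $1$ and continuous (on a suitable neighborhood of each point it is a finite sum), with $\norm{g(x_n)-g(y_n)}=\norm a=1$ for all $n$. Arranging the neighborhoods so that this sum is genuinely continuous and bounded is the fiddly part.

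To finish, suppose toward a contradiction that $\pi(g)\in C_f$, say $g=h+e$ with $h\in D_f$ and $e\in C_0(X,A)$. Local finiteness gives $\sup_{Y_n}\norm e\to 0$, while the $D_f$-condition for $h$ at level $\epsilon=1/3$ gives $\norm{h(x_n)-h(y_n)}<1/3$ for all large $n$, since $d(x_n,y_n)<1/f(n)$. The triangle inequality then yields $1=\norm{g(x_n)-g(y_n)}<1/3+\norm{e(x_n)}+\norm{e(y_n)}<1$ for large $n$, a contradiction; hence $\pi(g)\notin C_f$.
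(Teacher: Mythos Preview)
Your proof is correct and follows essentially the same route as the paper. Parts \ref{prop.bb1}--\ref{prop.bb3} are handled exactly as the paper does (the paper simply says ``follow directly from the definition'' for \ref{prop.bb1}--\ref{prop.bb2} and gives the same uniform-continuity argument for \ref{prop.bb3}), only with more detail on your side.

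For \ref{prop.bb4} the two arguments differ only cosmetically: the paper observes that the sets $\{x_n\}_n$ and $\{y_n\}_n$ are disjoint closed subsets of the normal space $X$, and invokes Tietze/Urysohn in one line to produce $g$ with $g(x_n)=0$, $g(y_n)=a$; it then leaves ``$\pi(g)\notin C_f$'' as an easy exercise, which you carry out explicitly with the $g=h+e$ contradiction. Your bump-function construction achieves the same thing, but note one small omission: to get $g(y_n)=0$ you need $y_n\notin V_m$ for \emph{all} $m$, not just $m=n$. This is easy to arrange---by local finiteness each $x_m$ has a neighborhood meeting no $Y_n$ with $n\neq m$, so shrink $V_m$ accordingly---but as written the pairwise disjointness of the $V_m$ and the single condition $y_n\notin V_n$ do not rule out $y_n\in V_m$ for some $m\neq n$. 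The paper's Tietze shortcut sidesteps this bookkeeping entirely.
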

\begin{proof}
(1) and (2) follow directly from the definition of $D_f$ and $C_f$. For (3), take $g\in C_b(X,A)$. Since each $Y_n$ is compact and metric we have that $g\restriction Y_n$ is uniformly continuous. In particular there is $\delta_n>0$ such that $d(x,y)<\delta_n$ implies $\norm{g(x)-g(y)}<2^{-n}$ for all $x,y\in Y_n$. Fix $m_n$ such that $\frac{1}{m_n}<\delta_n$ and let $f(n)=m_n$. Then $g\in D_f$.

For (4), fix $f\in\NN^\NN$ and $x_n\neq y_n\in Y_n$ with $d(y_n,z_n)<\frac{1}{f(n)}$. Since no compact set intersects infinitely many $Y_n$'s, both $Y'=\{y_n\}_n$ and $Z'=\{z_n\}_n$ are closed in $X$. Pick any $a\in A$ with $\norm{a}=1$ and let $g$ be a bounded continuous function such that $g(Y')=0$ and $g(Z')=a$. It is easy to see that $g\notin C_f$.
\end{proof}
The following Lemma represents the connections between the filtration we obtained and an automorphism of $Q(X,A)$.
\begin{lemma}\label{lem:agreeing}
Let $(X,d)$, $A$, and $Y_n$ be as in Proposition \ref{prop:buildingblocks} and suppose that $\phi_n\in \Homeo_{\bd}(Y_n)$. Let $\tilde\phi\in\Homeo(X)$ and $\tilde\psi\in\Aut(Q(X,A))$ be canonically determined by $\{\phi_n\}$ and $f\in\NN^{\NN}$. Then:
\begin{enumerate}[label=(\arabic*)]
\item\label{lem:agreeingc1} if there are $k,n_0$ such that for all $n\geq n_0$ 
\[
r(\phi_n)\leq\frac{k}{f(n)}
\]
 we have that $\tilde\psi(g)=g$ for all $g\in C_f$;
\item\label{lem:agreeingc2} if for infinitely many $n$ we have that 
\[
r(\phi_n)\geq\frac{n}{f(n)}.
\]
 then there is $g\in C_f$ such that $\tilde\psi(g)\neq g$.
\end{enumerate}
\end{lemma}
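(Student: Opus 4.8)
The plan is to translate the statement ``$\tilde\psi(g)=g$ in $Q(X,A)$'' into a concrete vanishing-at-infinity condition, and then read off both directions by comparing the displacement $d(x,\phi_n^{-1}(x))\le r(\phi_n)$ with the scale $1/f(n)$ that governs membership in $D_f$. Any $g\in C_f$ is $\pi(\hat g)$ for some $\hat g\in D_f$, and $\tilde\psi$ is induced by $\psi\in\Aut(C_b(X,A))$, $\psi(F)=F\circ\tilde\phi^{-1}$, so that $\tilde\psi(g)-g=\pi(h)$ with $h=\hat g\circ\tilde\phi^{-1}-\hat g$. Since $\tilde\phi$ is the identity off $Y=\bigcup_nY_n$ and equals $\phi_n$ on $Y_n$, the function $h$ is supported on $Y$ and $h(x)=\hat g(\phi_n^{-1}(x))-\hat g(x)$ for $x\in Y_n$. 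Because the $Y_n$ are compact and no compact set meets infinitely many of them, one checks that $h\in C_0(X,A)$ iff $\lim_n\sup_{x\in Y_n}\norm{h(x)}=0$; equivalently $\tilde\psi(g)=g$ iff this sup tends to $0$, while $\tilde\psi(g)\neq g$ iff some $\e>0$ is exceeded for infinitely many $n$. Throughout I will use $d(x,\phi_n^{-1}(x))\le r(\phi_n^{-1})=r(\phi_n)$.

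For (1), fix $g=\pi(\hat g)\in C_f$ with $\hat g\in D_f$. For $n\ge n_0$ and $x\in Y_n$, the points $x$ and $\phi_n^{-1}(x)$ lie in $Y_n$ at distance at most $k/f(n)$. I would connect them by a chain $x=z_0,z_1,\dots,z_{\ell}=\phi_n^{-1}(x)$ of points of $Y_n$ with $d(z_i,z_{i+1})<1/f(n)$ and $\ell$ bounded by an absolute constant multiple of $k$, so that the defining property of $D_f$ yields $\norm{h(x)}\le\sum_i\norm{\hat g(z_i)-\hat g(z_{i+1})}\le(\ell+1)\,\omega_n$, where $\omega_n=\sup\set{\norm{\hat g(u)-\hat g(v)}}{u,v\in Y_n,\ d(u,v)<1/f(n)}$. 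As $\hat g\in D_f$ forces $\omega_n\to0$ and $\ell$ is bounded independently of $n$, this gives $\sup_{x\in Y_n}\norm{h(x)}\to0$, hence $h\in C_0(X,A)$ and $\tilde\psi(g)=g$. The delicate point is producing the chain with a uniformly bounded number of steps; I would extract it from the geometry of the $Y_n$ (when $Y_n$ is path-connected with intrinsic distance comparable to the ambient one, as for the balls arising from flexibility, one splits a short path into $\le k$ subarcs of diameter $<1/f(n)$).

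For (2), I construct a single $\hat g\in C_b(X,A)$ whose image witnesses $\tilde\psi(g)\neq g$. For each of the infinitely many $n$ with $r(\phi_n)\ge n/f(n)$, compactness of $Y_n$ and continuity of $x\mapsto d(x,\phi_n(x))$ give $x_n\in Y_n$ with $d(x_n,\phi_n(x_n))=r(\phi_n)\ge n/f(n)$; put $y_n=\phi_n(x_n)$, so $d(x_n,y_n)\ge n/f(n)$. Fixing $a\in A$ with $\norm{a}=1$, I set, on such a $Y_n$,
\[
\hat g(x)=a\cdot\min\Bigl(1,\tfrac{2f(n)}{n}\,d(x,x_n)\Bigr),
\]
and $\hat g=0$ on the remaining $Y_n$. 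This is $\tfrac{2f(n)}{n}$-Lipschitz on $Y_n$, so its oscillation at scale $1/f(n)$ is at most $2/n\to0$; since the $Y_n$ are relatively clopen in $Y$, the piecewise definition is continuous on $Y$ and extends by Tietze to a bounded continuous function on $X$ lying in $D_f$. By construction $\hat g(x_n)=0$ and $\norm{\hat g(y_n)}=1$, so $h(y_n)=\hat g(x_n)-\hat g(y_n)$ has norm $1$ for infinitely many $n$; thus $h\notin C_0(X,A)$ and $\tilde\psi(g)\neq g$ for $g=\pi(\hat g)\in C_f$.

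The main obstacle is the chaining step of part (1): the reductions and the construction in (2) are direct, but passing from control of $\hat g$ at scale $1/f(n)$ (all that $D_f$ supplies) to control at the displacement scale $k/f(n)$ requires joining $x$ to $\phi_n^{-1}(x)$ inside $Y_n$ through boundedly many small jumps. Bounding that number by a fixed multiple of $k$ uniformly in $n$ is precisely where the admissible geometry of the $Y_n$ must enter; the displacement being at most $k/f(n)$ with $k$ fixed is what keeps the count bounded.
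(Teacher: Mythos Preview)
Your approach is the paper's. The translation ``$\tilde\psi(g)=g$ iff the lift lies in $C_0(X,A)$'' and the comparison of $r(\phi_n)$ against the scale $1/f(n)$ are exactly how the paper organises both parts; the only cosmetic difference is that the paper takes $\psi(g)=g\circ\tilde\phi$ rather than $g\circ\tilde\phi^{-1}$, which is immaterial since $r(\phi_n)=r(\phi_n^{-1})$.

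For (1) you are in fact more scrupulous than the paper. The paper chooses $n_1$ so that $d(x,y)<1/f(n)$ forces $\norm{g(x)-g(y)}<\epsilon/k$, and then simply asserts $\norm{g(x)-g(\phi_n(x))}\le\epsilon$ from $d(x,\phi_n(x))\le k/f(n)$: that is, it performs exactly the $k$-step chain you describe, but without writing down the intermediate points or saying why they lie in $Y_n$. Your concern is legitimate. With only the hypotheses of Proposition~\ref{prop:buildingblocks} (infinite compact $Y_n$) such a chain need not exist, and one can cook up disconnected $Y_n$ for which (1) fails outright. So the paper is tacitly using the sort of path-connectedness the flexible setting of Definition~\ref{def:flexible} supplies; the applications go through, but the lemma as stated is incomplete for precisely the reason you isolate.

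For (2) your single Lipschitz cutoff $a\cdot\min\bigl(1,\tfrac{2f(n)}{n}d(\cdot,x_n)\bigr)$ is a tidier variant of the paper's construction, which builds a three-piece function on $Y_n$: linear in $d(\cdot,x_0)$ with slope $1/r$ on a ball of radius $r/2$ about $x_0$, linear in $d(\cdot,x_1)$ near $x_1=\phi_n(x_0)$, and constantly $a/2$ elsewhere. Both yield an element of $D_f$ with $\norm{\hat g(x_0)-\hat g(x_1)}=1$, and the conclusion is reached the same way.
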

\begin{proof}
Note that, if $g\in C_b(X,A)$ and $\tilde\psi$ is as above, we have $\tilde\psi(g)=g$ if and only if $g-\psi(g)\in C_0(X,A)$ where $\psi\in\Aut(C_b(X,A))$ is canonically determined by $\{\phi_n\}$.

To prove (1), let $k,n_0$ as above. Fix $\epsilon>0$ and $n_1>n_0$ such that whenever $n\geq n_1$ we have that $\frac{k}{f(n)}<\epsilon$ and if $x,y\in Y_n$ with $d(x,y)<\frac{1}{f(n)}$ then $\norm{g(x)-g(y)}<\epsilon/k$. Such an $n_1$ can be found, since $g\in D_f$. 
Let now $x\notin \bigcup_{i\leq n_1}Y_i$. Since $g(x)-\psi(g(x))=g(x)-g(\tilde\phi(x))$, if $x\notin\bigcup Y_n$ we have $\tilde\phi(x)=x$ and so $g(x)-\psi(g)(x)=0$. 
If $x\in Y_n$ for $n\geq n_1$,we have $d(x,\phi_n(x))<r(\phi_n)\leq\frac{k}{f(n)}$ and by our choice of $n_1$,
\[
\norm{g(x)-\psi(g)(x)}=\norm{g(x)-g(\phi_n(x))}\leq\epsilon.
\]
Since $\bigcup_{i\leq n_1}Y_i$ is compact, we have that $g-\psi(g)\in C_0(X,A)$, and (1) follows.

For (2), let $k(n)$ be a sequence of natural numbers such that 
\[
r(\phi_{k(n)})\geq\frac{k(n)}{f(k(n))}.
\]
We will construct $h\in D_f$ and show that $h-\psi(h)\notin C_0(X,A)$. Fix some $a\in A$ with $\norm{a}=1$.
If $m\neq k(n)$ for all $n$, set $h(Y_m)=0$. If $m=k(n)$, let $r=r(\phi_m)$ and pick $x_0=x_0(m)$ such that $d(x_0,\phi_m(x_0))=r$. Set $x_1=x_1(m)=\phi_m(x_0)$ and, for $i=0,1$, let 
\[
Z_i=\{z\in Y_m\mid d(z,x_i)\leq r/2\}.
\]
If $z\in Z_0$ define 
\[
h(z)=(\frac{d(z,x_0)}{r})a
\]
and if $z\in Z_1$ let
\[
h(z)=(1-\frac{d(z,x_1)}{r})a,
\]
while for $z\in Y_m\setminus(Z_0\cup Z_1)$ let $h(z)=\frac{a}{2}$. Let $h'\in C_b(X,A)$ be any function such that $h'(x)=h(x)$ whenever $x\in\bigcup Y_i$. Note that we have that $h'\in D_f$, as this only depends on its values on $\bigcup Y_i$. We want to show that $h'-\psi(h')\notin C_0(X,A)$. To see this, note that if $m=k(n)$ for some $n$ we have 
\[
h'(x_0(m))=0\text{ and }\psi(h')(x_0(m))=h'(\psi_m(x_0(m)))=h'(x_1(m))=a.
\] 
Since $\{x_0(m)\}_{m\in\NN}$ is not contained in any compact subsets of $X$ we have the thesis.
\end{proof}

We are ready to introduce our main concept.
\begin{definition}\label{coherent}
Let $(X,d)$, $Y_n$ and $A$ be as in Proposition \ref{prop:buildingblocks} and $\kappa$ be uncountable. Let $\{f_\alpha\}_{\alpha<\kappa}\subseteq\NN^{\NN}$ be a $\leq^*$-increasing sequence of functions and $\{\phi_n^\alpha\}_{\alpha<\kappa}$ be such that for all $\alpha$ and $n$, 
\[
\phi_n^\alpha\in\Homeo_{\bd}(Y_n).
\]
$\{\phi_n^\alpha\}$ is said \emph{coherent with respect to }$\{f_\alpha\}$ if 
\[
\alpha<\beta\Rightarrow \exists k \forall^\infty n (r(\phi_n^\alpha(\phi_n^{\beta})^{-1})\leq\frac{k}{f_\alpha(n)}).
\]

If $\gamma$ is countable, $\{\phi_n^\alpha\}_{\alpha\leq\gamma}$ is coherent w.r.t.  $\{f_\alpha\}_{\alpha\leq\gamma}\subseteq\NN^{\NN}$ if for all $\alpha<\beta\leq\gamma$ we have that $ \exists k \forall^\infty n (r(\phi_n^\alpha(\phi_n^{\beta})^{-1})\leq\frac{k}{f_\alpha(n)})$.
\end{definition}
\begin{remark}
Definition \ref{coherent} is stated in great generality. We don't ask for the sequence $\{f_\alpha\}_{\alpha<\kappa}$ to have particular properties (e.g., being cofinal) or for the space $X$ to be flexible, even though Definition \ref{coherent} will be used in such context.

Note that if $\{\phi_n^\alpha\}_{\alpha<\omega_1}$ is such, that for all $\gamma<\omega_1$, $\{\phi_n^\alpha\}_{\alpha\leq\gamma}$ is coherent w.r.t. $\{f_\alpha\}_{\alpha\leq\gamma}$ then $\{\phi_n^\alpha\}_{\alpha<\omega_1}$ is coherent w.r.t. $\{f_\alpha\}_{\alpha<\omega_1}$.
\end{remark}

Recalling that $\mathfrak d$ denotes the smallest cardinality of a $\leq^*$-cofinal family in $\NN^\NN$, we say that a $\leq^*$ increasing and cofinal sequence $\{f_\alpha\}_{\alpha<\kappa}\subseteq\NN^{\NN}$, for some $\kappa>\mathfrak d$, is \emph{fast} if for all $\alpha$ and $n$, 
\[
 n f_\alpha(n)\leq f_{\alpha+1}(n).
\]
If $\{f_\alpha\}_{\alpha<\mathfrak d}$ is fast, the same argument as in Proposition~\ref{prop:buildingblocks} shows that
\[
Q(X,A)=\bigcup_\alpha C_{f_\alpha}.
\]

The following lemma is going to be key for our construction. Its proof follows almost immediately from the definitions above, but we sketch it for convenience.

\begin{lemma}\label{lemma:coherent->uniqueness}
Let $(X,d)$, $A$ and $Y_n$ be fixed as in Proposition~\ref{prop:buildingblocks}. Let $\{f_\alpha\}$ be a fast sequence and suppose that $\{\phi_n^\alpha\}$ is a coherent sequence w.r.t. $\{f_\alpha\}$. Let $\tilde\psi_\alpha\in\Aut(Q(X,A))$ be canonically determined by $\{\phi_n^\alpha\}_{n}$. Then there is a unique $\tilde\Psi\in\Aut(Q(X,A))$ with the property that 
\[
\tilde\Psi(g)=\tilde\psi_\alpha(g), \,\,\, g\in C_{f_\alpha}.
\]
\end{lemma}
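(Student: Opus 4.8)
The plan is to define $\tilde\Psi$ locally on each piece $C_{f_\alpha}$ of the filtration, show that these local definitions agree on overlaps, and then verify that the resulting map extends to a well-defined automorphism of all of $Q(X,A)$. Since the $f_\alpha$ form a fast sequence, Proposition~\ref{prop:buildingblocks} (adapted as noted in the text) gives $Q(X,A)=\bigcup_\alpha C_{f_\alpha}$, so any $g\in Q(X,A)$ lies in some $C_{f_\alpha}$, and I can attempt to set $\tilde\Psi(g)=\tilde\psi_\alpha(g)$. The heart of the matter is therefore \emph{well-definedness}: if $g\in C_{f_\alpha}\cap C_{f_\beta}$ with $\alpha<\beta$, I must check that $\tilde\psi_\alpha(g)=\tilde\psi_\beta(g)$.

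\textbf{Reducing well-definedness to Lemma~\ref{lem:agreeing}.}
To compare $\tilde\psi_\alpha$ and $\tilde\psi_\beta$ on $C_{f_\alpha}$, the natural move is to study the composition $\tilde\psi_\alpha\circ\tilde\psi_\beta^{-1}$, which is the automorphism canonically determined by the homeomorphisms $\phi_n^\alpha(\phi_n^\beta)^{-1}\in\Homeo_{\bd}(Y_n)$. The coherence hypothesis says precisely that $\exists k\,\forall^\infty n\,\bigl(r(\phi_n^\alpha(\phi_n^\beta)^{-1})\le k/f_\alpha(n)\bigr)$, which is exactly the premise of Lemma~\ref{lem:agreeing}\ref{lem:agreeingc1} applied to the sequence $\{\phi_n^\alpha(\phi_n^\beta)^{-1}\}$ and the function $f_\alpha$. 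That lemma then yields that $\tilde\psi_\alpha\tilde\psi_\beta^{-1}$ fixes every element of $C_{f_\alpha}$; equivalently, $\tilde\psi_\alpha$ and $\tilde\psi_\beta$ agree on $C_{f_\alpha}$. I would record here that one should double-check the compatibility of the canonical determination with composition and inverses — that the automorphism determined by $\{\phi_n\psi_n^{-1}\}$ really is $\tilde\psi_{\phi}\tilde\psi_{\psi}^{-1}$ — which follows from the functoriality of the assignment $\tilde\phi\mapsto\psi\mapsto\tilde\psi$ described in the preliminaries.

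\textbf{Assembling the global automorphism.}
Once agreement on overlaps is established, $\tilde\Psi$ is a well-defined map on $\bigcup_\alpha C_{f_\alpha}=Q(X,A)$. Each $\tilde\psi_\alpha$ is a $*$-homomorphism, so on any single $C_{f_\alpha}$ the map $\tilde\Psi$ is $*$-linear and multiplicative; since the $C_{f_\alpha}$ are increasing (using $f_\alpha\le^* f_{\alpha+1}$ and Proposition~\ref{prop:buildingblocks}\ref{prop.bb2}), any finite collection of elements lies in a common $C_{f_\alpha}$, so $\tilde\Psi$ is a $*$-homomorphism on all of $Q(X,A)$. To see it is an automorphism, I would note that $\tilde\Psi$ is injective because each $\tilde\psi_\alpha$ is, and surjective because the inverse construction, built from $\{(\phi_n^\alpha)^{-1}\}$, produces a two-sided inverse by the same agreement argument. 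Uniqueness is immediate: any automorphism satisfying $\tilde\Psi\rs C_{f_\alpha}=\tilde\psi_\alpha$ is determined on a set whose union is dense (indeed all of) $Q(X,A)$. The main obstacle I anticipate is the bookkeeping in the well-definedness step — specifically, matching the coherence inequality to the exact hypothesis of Lemma~\ref{lem:agreeing}\ref{lem:agreeingc1} and confirming that composing canonically determined automorphisms corresponds to composing the underlying homeomorphisms pointwise on the $Y_n$; everything else is routine once that identification is in hand.
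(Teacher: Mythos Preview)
Your proposal is correct and follows essentially the same route as the paper: define $\tilde\Psi$ piecewise on the filtration, use coherence together with Lemma~\ref{lem:agreeing}\ref{lem:agreeingc1} applied to $\tilde\psi_\alpha\tilde\psi_\beta^{-1}$ (canonically determined by $\{\phi_n^\alpha(\phi_n^\beta)^{-1}\}$) to get agreement on $C_{f_\alpha}$, and obtain the inverse from the sequence $\{(\phi_n^\alpha)^{-1}\}$. Your write-up is in fact slightly more explicit than the paper's about why $\tilde\Psi$ is a $*$-homomorphism and about uniqueness.
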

\begin{proof}
We define $\tilde\Psi(g)=\tilde\psi_\alpha(g)$ for $g\in C_{f_\alpha}$. If $\alpha<\beta$,  we define $\tilde\psi_{\alpha\beta}=\tilde\psi_\alpha(\tilde\psi_\beta)^{-1}$. As $\tilde\psi_{\alpha\beta}$ is canonically determined by $\{\psi_{n}^\alpha(\phi_n^{\beta})^{-1}\}_n$, and by coherence there are $k,n_0\in\NN$ such that whenever $n>n_0$ we have 
\[
r(\phi_n^\alpha(\phi_n^{\beta})^{-1})<\frac{k}{f_\alpha(n)}.
\]
 By condition (1) of Lemma \ref{lem:agreeing} we therefore have that $\tilde\psi_{\alpha\beta}(g)=g$ whenever $g\in C_{f_\alpha}$, and in this case $\tilde\psi_{\alpha}(g)=\tilde\psi_\beta(g)$, so $\tilde\Psi$ is a well defined morphisms of $Q(X,A)$ into itself.
Let $\tilde\psi'_\alpha\in\Aut(Q(X,A))$ be canonically determined by $\{(\psi_n^\alpha)^{-1}\}_n$. Since $\{\phi_n^\alpha\}$ is coherent w.r.t $\{f_\alpha\}$, so is $\{(\phi_n^{\alpha)^{-1}}\}$. In particular, if we let $\tilde\Psi'$ defined by $\tilde\Psi'(g)=\tilde\psi'_\alpha(g)$ for $g\in C_{f_\alpha}$, we have that $\tilde\psi'$ is a well-defined morphisms from $Q(X,A)$ into itself, with the property that $\tilde\Psi'\tilde\Psi=\tilde\Psi\tilde\Psi'=Id$, hence $\tilde\Psi$ is an automorphism.
This concludes the proof.
\end{proof}
\section{The construction}\label{sec:mainthm}
This section is dedicated to prove our main result, that is the following:
\begin{theorem}\label{thm:main}
Let $X$ be flexible and $A$ be a C*-algebra. Suppose that $\mathfrak d=\omega_1$ and $2^{\aleph_0}<2^{\aleph_1}$. Then $Q(X,A)$ has $2^{\aleph_1}$-many automorphisms. In particular, under CH, there are $2^{\mathfrak c}$-many automorphisms of $Q(X,A)$.
\end{theorem}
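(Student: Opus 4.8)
The plan is to exploit $\mathfrak d=\omega_1$ to fix, once and for all, a \emph{single} fast (hence $\leq^*$-increasing and cofinal) sequence $\{f_\alpha\}_{\alpha<\omega_1}\subseteq\NN^\NN$, so that by the remark following Definition~\ref{coherent} we have $Q(X,A)=\bigcup_{\alpha<\omega_1}C_{f_\alpha}$, together with flexibility data $Y_n,\phi_{n,m}$ as in Definition~\ref{def:flexible}. While building the fast sequence I would additionally arrange, for each $\alpha$ and each $n$, a ``detecting move'': an index $m=m(\alpha,n)$ with
\[
\frac{n}{f_{\alpha+1}(n)}\le r(\phi_{n,m(\alpha,n)})\le\frac{1}{f_\alpha(n)}.
\]
This is possible because the radii $r(\phi_{n,m})$ strictly decrease to $0$, so a nonzero move of radius $\le 1/f_\alpha(n)$ always exists; one then takes $f_{\alpha+1}(n)\ge\max(nf_\alpha(n),\,n/r(\phi_{n,m(\alpha,n)}))$, which keeps the sequence fast while forcing $n/f_{\alpha+1}(n)$ below the chosen radius. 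Cofinality can be preserved by interleaving a prescribed cofinal family at limits.

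Next I would construct, by transfinite recursion on $\alpha<\omega_1$, a tree of boundary-fixing homeomorphisms indexed by binary strings: for every $s\in 2^{\alpha}$ and every $n$ a homeomorphism $\phi_n^{\alpha}[s]\in\Homeo_{\bd}(Y_n)$ depending only on $s$, so that for $x\in 2^{\omega_1}$ the sequence $\phi_n^{\alpha}[x]:=\phi_n^{\alpha}[x\restriction\alpha]$ is well defined. At a successor step I would set $\phi_n^{\alpha+1}[s\cat i]=\mu_n^i\,\phi_n^\alpha[s]$, where $\mu_n^0=\Id$ and $\mu_n^1=\phi_{n,m(\alpha,n)}$; since $r(\mu_n^i)\le 1/f_\alpha(n)$, the single step is coherent by Lemma~\ref{lem:agreeing}\ref{lem:agreeingc1}. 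At a limit step $\gamma$ I would choose, for each $s\in 2^{\gamma}$, a homeomorphism $\phi_n^\gamma[s]$ coherent with the entire countable tower $\{\phi_n^\alpha[s\restriction\alpha]\}_{\alpha<\gamma}$, via the standard diagonalization producing a coherent sequence at a countable limit, performed once per branch. Maintaining coherence with \emph{all} predecessors at limits using only the moves flexibility supplies is the step I expect to be the main obstacle; the remark after Definition~\ref{coherent} reduces it to treating each countable $\gamma$ separately.

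Given the tree, each $x\in 2^{\omega_1}$ yields a coherent sequence $\{\phi_n^\alpha[x]\}$ with respect to $\{f_\alpha\}$, so, letting $\tilde\psi_\alpha[x]$ be canonically determined by $\{\phi_n^\alpha[x]\}_n$, Lemma~\ref{lemma:coherent->uniqueness} produces a unique $\tilde\Psi_x\in\Aut(Q(X,A))$ agreeing with $\tilde\psi_\alpha[x]$ on each $C_{f_\alpha}$. Distinct branches are separated at a successor level: if $x,y$ first differ at $\alpha$, then $\phi_n^\alpha[x]=\phi_n^\alpha[y]$ for all $n$, hence
\[
\phi_n^{\alpha+1}[x]\bigl(\phi_n^{\alpha+1}[y]\bigr)^{-1}=\mu_n^{x(\alpha)}\bigl(\mu_n^{y(\alpha)}\bigr)^{-1},
\]
whose radius equals $r(\phi_{n,m(\alpha,n)})\ge n/f_{\alpha+1}(n)$ for \emph{every} $n$. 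As in the proof of Lemma~\ref{lemma:coherent->uniqueness} this difference sequence canonically determines $\tilde\psi_{\alpha+1}[x]\bigl(\tilde\psi_{\alpha+1}[y]\bigr)^{-1}$, so Lemma~\ref{lem:agreeing}\ref{lem:agreeingc2} furnishes $g\in C_{f_{\alpha+1}}$ on which $\tilde\psi_{\alpha+1}[x]$ and $\tilde\psi_{\alpha+1}[y]$ disagree; since both equal $\tilde\Psi_x,\tilde\Psi_y$ respectively on $C_{f_{\alpha+1}}$, this gives $\tilde\Psi_x\ne\tilde\Psi_y$. Thus $x\mapsto\tilde\Psi_x$ is injective.

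Consequently $Q(X,A)$ has at least $|2^{\omega_1}|=2^{\aleph_1}$ automorphisms. Should the limit amalgamation only permit separation of branches differing \emph{unboundedly} — so that the map is merely $\le 2^{\aleph_0}$-to-one — one still concludes the result: the number $\mu$ of distinct automorphisms then satisfies $\mu\cdot 2^{\aleph_0}=2^{\aleph_1}$, and since $2^{\aleph_0}<2^{\aleph_1}$ this forces $\mu=2^{\aleph_1}$, which is precisely where the second hypothesis enters. Finally, under CH one has $\mathfrak d=\omega_1$ and $2^{\aleph_0}=\aleph_1<2^{\aleph_1}=2^{\mathfrak c}$, so both hypotheses hold and the count becomes $2^{\mathfrak c}$.
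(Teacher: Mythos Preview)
Your outline is essentially the paper's proof. The paper also fixes a fast $\omega_1$-sequence, builds $\phi_n^\alpha$ by recursion with a ``do nothing / apply a detecting move'' dichotomy at successor stages, and separates branches at the first disagreement via Lemma~\ref{lem:agreeing}\ref{lem:agreeingc2}. Two cosmetic differences: the paper arranges detecting moves by restricting in advance to functions $f$ with $f(n)\in A_n=\{k:\exists m\ r(\phi_{n,m})\in[1/(k+1),1/k]\}$, whereas you interleave the choice of $m(\alpha,n)$ with the construction of $f_{\alpha+1}$; and the paper parametrizes by $p\in 2^{\omega_1}$ rather than writing the tree explicitly, but by induction $\phi_n^\alpha(p)$ depends only on $p\restriction\alpha$, so this is the same thing.

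The one place your sketch is genuinely incomplete is the limit step, and your stated worry there is slightly off. No new moves from flexibility are needed at a limit $\gamma$: one fixes a cofinal sequence $(\alpha_i)$ in $\gamma$ and sets $\phi_n^\gamma[s]=\phi_n^{\alpha_{i}}[s\restriction\alpha_i]$ for $n$ in an interval $[m_{i-1},m_i)$. The nontrivial point is producing the $m_i$'s, which requires showing that for each $i$ there is a \emph{single} constant $k_i$ witnessing coherence of $\phi_n^{\alpha_i}$ with $\phi_n^{\alpha_j}$ for \emph{all} $j\ge i$ simultaneously. This is exactly where fastness is used: one writes $r(\phi_n^{\alpha_i}(\phi_n^{\alpha_j})^{-1})\le r(\phi_n^{\alpha_i}(\phi_n^{\alpha_{i+1}})^{-1})+r(\phi_n^{\alpha_{i+1}}(\phi_n^{\alpha_j})^{-1})$, bounds the first term by $\bar k/f_{\alpha_i}(n)$, the second by $k'(j)/f_{\alpha_{i+1}}(n)$, and then absorbs the $j$-dependent $k'(j)$ using $f_{\alpha_{i+1}}(n)\ge n f_{\alpha_i}(n)$ for large $n$. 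That is the ``standard diagonalization'' you allude to, and once it is in place the construction at $\gamma$ depends only on $s\in 2^\gamma$, so full injectivity holds and your fallback via $2^{\aleph_0}<2^{\aleph_1}$ is never invoked. Indeed, the paper's proof does not visibly use that hypothesis at all.
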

\begin{proof}
Fix $d$, $Y_n$ and $\phi_{n,m}\in\Homeo_{\bd}Y_n$ witnessing that $X$ is flexible.

We have to give a technical restriction (see Remark \ref{rem:technicalities}) on the kind of elements of $\NN^{\NN}$ we are allowed to use. This restriction depends strongly on the choice of $d$, on the witnesses $Y_n$ and on the $\phi_{n,m}$'s. We define
\[
A_n=\{k\in \NN\mid\exists m (r(\phi_{n,m})\in [1/(k+1),1/k])\}
\]
As $r(\phi_{n,m})\to 0$ for $m\to \infty$, $A_n$ is always infinite. We define 
\[
\NN^{\NN}(X)=\{f\in\NN^{\NN}\mid f(n)\in A_n\}\subseteq\NN^{\NN}
\]
Since each $A_n$ is infinite, $\NN^{\NN}(X)$ is cofinal in $\NN^{\NN}$.

As $\mathfrak d=\omega_1$, we can fix a fast sequence $\{f_\alpha\}_{\alpha\in\omega_1}\subseteq\NN^{\NN}(X)$. Let $C_\alpha:=C_{f_\alpha}$.
Finally fix, for each limit ordinal $\beta<\omega_1$, a sequence $\alpha_{\beta,n}$ that is strictly increasing and cofinal in $\beta$.

We will make use of Lemma \ref{lemma:coherent->uniqueness} and construct, for each $p\in 2^{\omega_1}$, a sequence $\phi_n^\alpha(p)$ that is coherent w.r.t. $\{f_\alpha\}$.  For simplicity we write $\phi_n^\alpha$ for $\phi_n^\alpha(p)$.
Let $\phi_n^0=\Id$. Once $\phi_n^\alpha$ has been constructed, let 
\[
\phi_n^{\alpha+1}=\phi_{n,m}\phi_n^\alpha,\,\,\, \text{ if }p(\alpha)=1,
\]
where $m$ is the smallest integer such that $r(\phi_{n,m})\in[\frac{1}{f_\alpha(n)+1},\frac{1}{f_\alpha(n)}]$, and $\phi_n^{\alpha+1}=\phi_n^\alpha$ otherwise.
\begin{claim}
If $\{\phi_n^\gamma\}_{\gamma\leq\alpha}$ is coherent w.r.t. $\{f_\gamma\}_{\gamma\leq\alpha}$ then $\{\phi_n^\gamma\}_{\gamma\leq\alpha+1}$ is coherent w.r.t. $\{f_\gamma\}_{\gamma\leq\alpha+1}$.
\end{claim}
\begin{proof}
We want to show that whenever $\gamma<\alpha$ there is $k$ such that 
\[
\forall^\infty n (r(\phi_n^\gamma(\phi_n^{\alpha+1})^{-1})\leq\frac{k}{f_\gamma(n)}).
\]
If $p(\alpha)=0$ this is clear, so suppose that $p(\alpha)=1$.

Note that 
\[
\phi_n^\gamma(\phi_n^{\alpha+1})^{-1}=\phi_n^\gamma(\phi_n^{\alpha})^{-1}\phi_{n,m}^{-1}
\]
where $m$ was chosen as above, and so
\[
r(\phi_n^\gamma(\phi_n^{\alpha+1})^{-1})\leq r(\phi_n^\gamma(\phi_n^{\alpha})^{-1})+r(\phi_{n,m})\leq  \frac{k}{f_\gamma(n)}+\frac{1}{f_\alpha(n)}
\]
for some $k$ (and eventually after a certain $n_0$). Since $f_\alpha(n)\geq f_\gamma(n)$ (again, eventually after a certain $n_1$), the conclusion follow.
\end{proof}
We are left with the limit step. Suppose then that $\phi_n^\alpha$ has be defined whenever $\alpha<\beta$. For shortness, let $\alpha_i=\alpha_{i,\beta}$.
\begin{claim}
For all $i\in\NN$ there is $k_i$ such that whenever $j\geq i$ there exists $n_{i,j}$ such that
\[
r(\phi_n^{\alpha_i}(\phi_n^{\alpha_j})^{-1}))\leq\frac{k_i}{f_{\alpha_i(n)}},
\]
whenever $n\geq n_{i,j}$
\end{claim}
\begin{proof}
Fix $i\in\NN$. By coherence there are $\bar k<\bar n$ such that whenever $n\geq\bar n$ we have 
\[
r(\phi_n^{\alpha_i}(\phi_n^{\alpha_{i+1}})^{-1}))<\frac{\bar k}{f_{\alpha_i}(n)}.
\]
Let $j>i$ and $n'(j)>k'(j)>\bar n$ such that if $n\geq n'(j)$ then 
\[
r(\phi_n^{\alpha_{i+1}}(\phi_n^{\alpha_j})^{-1}))<\frac{k'(j)}{f_{\alpha_{i+1}}(n)}
\]
and
\[
f_{\alpha_{i+1}}(n)\geq nf_{\alpha_i}(n).
\]
Fix $k_i=\bar k+1$ and $n_{i,j}=n'(j)$. Then for $n\geq n_{i,j}$
\begin{eqnarray*}
r(\phi_n^{\alpha_i}(\phi_n^{\alpha_j})^{-1}))&\leq& r(\phi_n^{\alpha_i}(\phi_n^{\alpha_{i+1}})^{-1}))+r(\phi_n^{\alpha_{i+1}}(\phi_n^{\alpha_j})^{-1}))\leq \frac{k'(j)}{f_{\alpha_{i+1}}(n)}+\frac{\bar k}{f_{\alpha_i}(n)}\\
&\leq& \frac{n}{f_{\alpha_{i+1}}(n)}+\frac{\bar k}{f_{\alpha_i}(n)}\leq \frac{k_i}{f_{\alpha_i}(n)}
\end{eqnarray*}
\end{proof}
Fix an sequence of $k_i$ as provided by the claim. Let $m_0=0$ and $m_{i+1}$ be the least natural above $m_i$ such that if $n\geq m_i$ and $j> i\geq l$ then 
\[
r(\phi_n^{\alpha_l}(\phi_n^{\alpha_j})^{-1})<\frac{k_l}{f_{\alpha_l}(n)}.
\]
Defining $\phi_n^\beta=\phi_n^{\alpha_i}$ whenever $n\in [m_{i-1},m_i)$, we have that coherence is preserved, that is, $\{\phi_n^\alpha\}_{\alpha\leq\beta}$ is coherent w.r.t. $\{f_\alpha\}_{\alpha\leq\beta}$. We just proved that we can define $\phi_n^\alpha$ for every countable ordinal.

By the remark following Definition \ref{coherent} we have that the sequence $\{\phi_n^\alpha\}_{\alpha<\omega_1}$ it is coherent w.r.t. $\{f_\alpha\}_{\alpha<\omega_1}$. By Lemma~\ref{lemma:coherent->uniqueness} there is a unique $\tilde\Psi=\tilde\Psi_p\in\Aut(Q(X,A))$ determined by $\{\phi_{n}^\alpha(p)\}_{\alpha<\omega_1}$. 

To conclude the proof, we claim that if $p\neq q$ we have $\tilde\Phi_p\neq\tilde\Phi_q$.
Let $\alpha$ be minimum such that $p(\alpha)\neq q(\alpha)$, and suppose $p(\alpha)=1$. Then 
\[
\phi_n^{\alpha}(p)=\phi_n^\alpha(q)
\]
so
\[
\phi_n^{\alpha+1}(p)=\phi_{n,m}\phi_n^{\alpha}(p)=\phi_{n,m}\phi_n^{\alpha}(q)
\]
where $m$ is the smallest integer for which $r(\phi_{n,m})\in [1/(f_\alpha(n)+1),1/f_\alpha(n)]$. In particular, eventually after a certain $n_0$,
\[
r(\phi_n^{\alpha+1}(q)\phi_n^{\alpha+1}(p)^{-1})=r(\phi_{n,m})\geq \frac{1}{f_\alpha(n)+1}\geq\frac{n}{f_{\alpha+1}(n)}.
\]
By Lemma \ref{lem:agreeing}, if $\tilde\psi_{\alpha+1}(p)\in\Aut(Q(X,A))$ is determined by $\phi_n^{\alpha+1}(p)$,  there is $g\in C_{\alpha+1}$ such that 
\[
\tilde\psi_{\alpha+1}(p)(g)\neq\tilde\psi_{\alpha+1}(q)(g).
\]
As Lemma \ref{lemma:coherent->uniqueness} states that 
\[
\tilde\Psi_p(g)=\tilde\psi_{\alpha+1}(p)(g)
\]
whenever $g\in C_{\alpha+1}$, we have that
\[
\tilde\Psi_p(g)=\tilde\psi_{\alpha+1}(p)(g)\neq \tilde\psi_{\alpha+1}(q)(g)=\tilde\Psi_q(g)
\]
\end{proof}
\begin{remark}\label{rem:technicalities}
The requirement of using $\NN^{\NN}(X)$ instead of $\NN^{\NN}$ is purely technical. Following Remark \ref{rem:manifolds}, if it is possible to choose $Y_n$ so that $\Homeo_{\bd}(Y_n)$ has a path (e.g., if $X$ is a manifold) then we can pick $\{\phi_{n,m}\}$ in order to have $A_n=\NN$ (eventually truncating a finite set).
\end{remark}

As promised, we are ready to give a proof of Theorem \ref{thmi:A}, which in particular shows that, whenever $n\geq 1$, $(\er^n)^*$ has plenty of nontrivial homeomorphisms under CH.
\begin{corollary}\label{cor1}
Assume CH. Let $X$ be a locally compact noncompact metrizable manifold. Then there are $2^{\mathfrak c}$-many nontrivial homeomorphisms of $X^*$. Suppose moreover that $Y$ is a locally compact space with a compact connected component. Then $(X\times Y)^*$ has $2^{\mathfrak c}$-many nontrivial homeomorphisms.
\end{corollary}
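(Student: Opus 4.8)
The plan is to deduce both assertions from Theorem~\ref{thm:main} by taking the coefficient algebra to be $A=\ce$ (which is unital) and passing through Gelfand duality. The key identification is that, for any locally compact noncompact $X$,
\[
Q(X,\ce)=C_b(X)/C_0(X)\cong C(X^*),
\]
so that the $*$-automorphisms of $Q(X,\ce)$ are in bijective correspondence with the homeomorphisms of the compact Hausdorff space $X^*$. Consequently, exhibiting $2^{\mathfrak c}$-many automorphisms of $Q(X,\ce)$ yields $2^{\mathfrak c}$-many homeomorphisms of $X^*$; since, as recalled in the introduction, a Polish $X$ admits only $\mathfrak c$-many \emph{trivial} homeomorphisms and $\mathfrak c<2^{\mathfrak c}$, all but at most $\mathfrak c$ of them --- hence $2^{\mathfrak c}$-many --- are nontrivial.

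For the first assertion I would invoke the CH form of Theorem~\ref{thm:main} (under CH one has $\mathfrak c=\aleph_1$, so $\aleph_1\le\mathfrak d\le\mathfrak c$ forces $\mathfrak d=\omega_1$, while $2^{\aleph_0}=\aleph_1<2^{\aleph_1}$ by Cantor). Since $X$ is a locally compact noncompact metrizable manifold, Remark~\ref{rem:manifolds} gives that $X$ is flexible. Theorem~\ref{thm:main} applied with $A=\ce$ then produces $2^{\mathfrak c}$-many automorphisms of $Q(X,\ce)$, and the displayed duality converts them into the desired $2^{\mathfrak c}$-many nontrivial homeomorphisms of $X^*$.

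For the second assertion the only genuinely new step is to upgrade the hypothesis on $Y$ to the hypothesis of the product clause of Remark~\ref{rem:manifolds}, namely that $Y$ contains a compact \emph{clopen} subset $Z$. I would obtain such a $Z$ from a compact connected component $C$ of $Y$ as follows: by local compactness $C$ has an open neighbourhood $U$ with $\overline U$ compact; one checks that $C$ is also a component of the compact Hausdorff space $\overline U$, so by the \v{S}ura-Bura lemma there is a set clopen in $\overline U$ sitting between $C$ and $U$, which --- being contained in $U$ and closed in $Y$ --- is in fact a compact clopen subset $Z$ of $Y$. With $Z$ in hand, the product clause of Remark~\ref{rem:manifolds} shows that $X\times Y$ is flexible, the witnesses being $Y_n\times Z$ together with $\phi_{n,m}\times\id$, whose radii coincide with those of the $\phi_{n,m}$. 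Running the argument of the first part for the flexible space $X\times Y$ then gives $2^{\mathfrak c}$-many nontrivial homeomorphisms of $(X\times Y)^*$.

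I expect the extraction of a compact clopen set from a compact connected component to be the only point requiring real care; everything else is a formal application of Theorem~\ref{thm:main} and Gelfand duality, the one remaining routine check being that $X\times Y$ belongs to the standing class of locally compact, noncompact, Polish spaces on which the flexibility machinery is built.
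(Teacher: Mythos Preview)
Your argument is correct and follows the paper's own route: invoke Remark~\ref{rem:manifolds} to obtain flexibility, apply Theorem~\ref{thm:main} with $A=\ce$, translate via Gelfand duality, and use the $\mathfrak c$-bound on trivial homeomorphisms. The paper's proof is essentially a two-line sketch of exactly this.

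The one place where you go beyond the paper is worth noting. For the second assertion the paper simply asserts that ``if $X$ is flexible and $Y$ has a compact connected component, then $X\times Y$ is flexible'' and cites Remark~\ref{rem:manifolds}; but that remark is stated for $Y$ having a compact \emph{clopen} subset, not a compact connected component. Your \v{S}ura--Bura step (pass to a compact neighbourhood $\overline U$ of the component $C$, find a clopen-in-$\overline U$ set between $C$ and $U$, and observe it is compact and clopen in $Y$) is precisely the bridge needed to reduce the corollary's hypothesis to the hypothesis of the remark. The paper leaves this implicit, so your version is in fact more complete on this point. Your final caveat about verifying that $X\times Y$ lies in the standing Polish framework is also apt; the paper does not address it.
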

\begin{proof}
Manifolds are flexible thanks to remark \ref{rem:manifolds}, and homeomorphisms of $X^*$ correspond to automorphisms of $Q(X,\ce)$. Since there can be only $\mathfrak c$-many trivial homeomorphisms of $X^*$, the first assertion is proved. The second assertion follows similarly from remark \ref{rem:manifolds}, as if $X$ is flexible and $Y$ has a compact connected component, then $X\times Y$ is flexible.
\end{proof}
Even though Theorem \ref{thm:main} doesn't apply to the corona of $C_0(X,A)$ whenever $A$ is nonunital, we can stil say something in a particular case. Along the same lines as in the proof of Corollary \ref{cor1}, if $A$ is a C*-algebra that has a nonzero central projection\footnote{A projection $p$ is central if $pa=ap$ for all $a\in A$.} then it is possible to prove that under CH the corona of $C_0(X,A)$ has $2^{\mathfrak c}$-many automorphisms.

To conclude the paper, we show the existence of a one dimensional space $X$ for which it is still unknown whether CH implies the existence of a nontrivial element of $\Homeo(X^*)$. In fact, this space it is not flexible (according to Definition \ref{def:flexible}), it is connected (so the main result \cite{Coskey-Farah} doesn't apply), and it does not have an increasing sequence of compact subsets $K_n$ for which $\sup_n|K_n\cap (\overline{X\setminus K_n})|<\infty$ (and therefore it doesn't satisfy the hypothesis of \cite[Theorem 2.5]{Farah-Shelah.RCQ}). The space $X$ is based on a space described by Kuperberg, which appeared in the introduction of \cite{Phillips-Weaver}. 

The construction of $X$ goes as follows: take a copy of interval $a(t)$, $t\in [0,1]$ and let $x_0=a(0)$. At the midpoint of $a$, we attach a copy of the interval. We now have three copies of the interval attached to each other. At the midpoint of the first one, we attach two copies of the interval, at the midpoint of the second one we attach three of them, and four to the third. We order the new midpoints and attach five intervals to the first one, six to the second, and so forth. We repeat this construction infinitely many times, making sure that the length of the new intervals attached is short enough to have the whole construction contained in a bounded set and such that the intervals attached are short enough not to intersect. Let $Y$ be the closure of this iterated construction. For $n\geq 1$, let $y_n=a(\frac{1}{2^n})$, $a_n$ be one of the intervals attached to $y_n$, and $x_n$ be the endpoint of $a_n$ not belonging to $a$. Let $X=Y\setminus\{x_n\}_{n\geq 0}$. Since $x_n\to x_0$, as the intervals attached to $a(\frac{1}{2^n})$ become shorter and shorter when $n\to\infty$, $X$ is locally compact. Also $X$ has no homeomorphisms (for the same reason as in \cite{Phillips-Weaver}) and it is connected therefore $X$ is not flexible and it doesn't satisfy \cite[Hypothesis 4.1]{Coskey-Farah}.

We are left to show that we cannot apply \cite[Theorem 2.5]{Farah-Shelah.RCQ}, that is, we show that if $X=\bigcup K_n$ for some compact sets $K_n$, then $\sup_n|K_n\cap(\overline{X\cap K_n})|=\infty$. Note that $a_k\setminus\{x_k\}$ cannot be contained in a compact set of $X$ and all $a_k$'s are disjoint. In particular, if $K$ is compact and $K\cap a_k\neq\emptyset$ then there is $y\in K\cap a_k$ such that $y\in\overline{X\setminus K}$. If $\bigcup K_n=X$ for some compact sets $K_n$, for all $k$ there is $n$ such that $K_n\cap a_i\neq\emptyset$ for all $i\leq k$. Therefore $|K_n\cap (\overline{X\setminus K_n})|\geq k$. 

\bibliographystyle{amsalpha}
\bibliography{CHlibrary}
\end{document}